\newtheorem{theorem}{Theorem}
\newtheorem{proposition}[theorem]{Proposition}
\newtheorem{corollary}[theorem]{Corollary}
\newcommand{\N}{\mathbb{N}} 
\newcommand{\Z}{\mathbb{Z}} 
\newcommand{\R}{\mathbb{R}} 
\newcommand{\field}{\mathbb{K}} 
\newcommand{\genset}{A} 
\newcommand{\matA}{\mathbf{A}}
\newcommand{\matF}{\mathbf{F}} 
\newcommand{\matG}{\mathbf{G}}
\newcommand{\matH}{\mathbf{H}} 
\newcommand{\matM}{\mathbf{M}}
\newcommand{\matI}{\mathbf{I}}
\newcommand{\veca}{\mathbf{a}} 
\newcommand{\vecb}{\mathbf{b}}
\newcommand{\vecc}{\mathbf{c}} 
\newcommand{\vecd}{\mathbf{d}}
\newcommand{\vecg}{\mathbf{g}}
\newcommand{\vecu}{\mathbf{u}}
\newcommand{\vecv}{\mathbf{v}}
\newcommand{\vecw}{\mathbf{w}}
\newcommand{\sage}{\texttt{SageMath}\xspace}
\newcommand{\macaulay}{\texttt{Macaulay2}\xspace}
\newcommand{\libraryname}{\texttt{stdPairs.spyx}\xspace}
\newcommand{\normaliz}{\texttt{Normaliz}\xspace}
\DeclareMathOperator{\std}{std} 
\DeclareMathOperator{\stdp}{Std}
\begin{document}
\title{Standard pairs of monomial ideals over non-normal affine semigroups in \sage}
\author{Byeongsu Yu}
\address{Mathematics Department\\Texas A\&M University\\College Station, TX 77843}
\email{byeongsu.yu@math.tamu.edu}

\begin{abstract}
We present \libraryname, a \sage library to compute standard pairs of
a monomial ideal over a pointed (non-normal) affine semigroup
ring. Moreover, \libraryname provides the associated prime ideals,
the corresponding multiplicities, and an irredundant irreducible
primary decomposition of a 
monomial ideal. The library expands on the \texttt{standardPairs}
function on \macaulay over polynomial rings, and is based on
algorithms from \cite{STDPAIR}. We also provide methods that allow the
outputs from this library to be compatible with the \normaliz package of \macaulay and \sage. 
\end{abstract}

\subjclass[2020]{Primary 13F65, 68W30, 90C90; Secondary 13P25, 20M25}

\maketitle 

\section{Introduction}
\label{section:intro}

Affine semigroup rings are objects of many studies in combinatorial
commutative algebra. The goal of this article is to present the
\sage library \libraryname, which systematizes computations for
monomial ideals in affine semigroup 
rings. The algorithms implemented here are based on the notion of
\emph{standard pairs}, introduced for monomial ideals in polynomial
rings by \cite{MR1339920}, and generalized to the semigroup ring case in
\cite{STDPAIR}. Standard pairs are a combinatorial structure that contains
information on primary and irreducible decompositions of monomial
ideals, as well as multiplicities. One of the main contributions of
\cite{STDPAIR} is that standard pairs and the associated algebraic concepts
can be effectively computed over affine semigroup rings.

The \sage library \libraryname implements the algorithms of
\cite{STDPAIR} to calculate standard pairs for monomial ideals in any
pointed (non-normal) affine semigroup ring. This library can be
regarded as a generalization of \texttt{standardPairs} function in
\macaulay implemented by \cite{MR1949549}. This library can be obtained via \url{https://github.com/byeongsuyu/StdPairs}.

\subsection*{Outline}
\cref{section:classes} provides background on affine semigroup rings,
their monomial ideals, and related combinatorial notions. It also explains their implementation as \sage classes. \cref{section:algorithms} presents the implementation of an algorithm finding standard pairs, proposed in~\cite[Section 4]{STDPAIR}. \cref{section:compatibility} shows compatibility with the \normaliz package by introducing methods to translate objects in \sage into objects in \macaulay using \normaliz.

\subsection*{Acknowledgements}
We would like to express our deepest appreciation to Laura Matusevich for conversations and helpful comments on draft versions of this paper. Also, we are grateful to Matthias K{\"o}ppe for advice on using \texttt{zsolve} in \texttt{4ti2}. 

\subsection{Notation}
In this paper, a semigroup of nonnegative integers including 0 is denoted by $\N=\{0,1,2, \cdots \}$. A ring of integers is $\Z$.  All nonnegative real numbers are represented by $\R_{\geq 0}$. Boldface upper case letters and boldface lower case letters denote matrices and vectors, respectively. This will be used when we call a cone $\R_{\geq 0}\matA$ for some $d \times n$ matrix $\matA$ over $\Z$. An arbitrary field is $\field$.

\section{Affine semigroup, ideal, and proper pair as classes of \sage}
\label{section:classes}

\subsection{Mathematical Background}
An \emph{affine semigroup} is a semigroup of $\Z^{d}$ generated by
finitely many vectors $\veca_{1},\cdots, \veca_{n}$ for some $n \in
\N.$ We let $\matA$ be a $d \times n$ matrix whose column vectors are
$\veca_{1},\cdots, \veca_{n} \in \Z^{d}$. The set of all nonnegative
integer linear combinations of $\veca_{1},\cdots, \veca_{n}$, denoted
by $\N\matA$, is an affine semigroup. These columns $\veca_{1},\cdots,
\veca_{n}$ are the \emph{generators} of the affine semigroup $\N
\matA$; the matrix $\matA$ is called the \emph{generating
  matrix}. Since $\N \matA$ contains 0, an affine semigroup is a
commutative monoid. Given a field $\field$, we are concerned with the
\emph{affine semigroup ring} $\field[\N \matA]$. A natural first
example is the polynomial ring in $d$-variables; in this case, $\matA$
is the $d\times d$ identity matrix. We refer to \cite[Section~7]{CCA}
for more background on this topic. Throughout this article, we assume
that the affine semigroup $\N\genset$ under consideration is
\emph{pointed}, which means that the cone $\R_{\geq 0} \matA$ does not contain lines. 

An \emph{ideal} of an affine semigroup is a set $I \subset \N \matA$
such that $I + \N \matA \subseteq I$. There is a one-to-one
correspondence between monomial ideals of $\field[\N \matA]$ and
ideals of $\N \matA$. Therefore, the definition of prime, irreducible,
and primary ideals of $\field[\N \matA]$ can be naturally extended to
the ideals of an affine semigroup. The \emph{standard monomials} of an
ideal $I \subset \N \matA$ are all elements of $\N \matA \smallsetminus I$. Let $\std(I)$ be a set of all standard monomials with respect to $I$.

A \emph{face} of an affine semigroup $\N\matA$ is a subsemigroup
$\N\matF \subseteq \N\matA$ such that the complement $\N\matA \smallsetminus
\N\matF$ is an ideal of $\N\matA$ \cite[Definition 7.8]{CCA}. Equivalently,
it is a subsemigroup $\N\matF$ with the property
that $\veca +\vecb \in \N \matF$ if and only if $\veca,\vecb \in \N
\matF$. The faces of an affine semigroup form a lattice which is isomorphic to the face
lattice of a (real) cone over the affine
semigroup~\cite{MR1251956,CCA}. Thus, we may represent a face
$\N\matF$ as a submatrix $\matF$ of $\matA$.

A \emph{pair} is a tuple $(\veca, \matF)$ of an element $\veca$ in $\N
\matA$ and a face $\matF$ of $\N \matA$~\cite{STDPAIR}. A \emph{proper
  pair} of an ideal $I$ is a pair $(\veca, \matF)$ such that $\veca +
\N\matF \subseteq \std(I)$. A pair $(\veca, \matF)$ \emph{divides}
$(\vecb,\matG)$ if there exists $\vecc \in \N \matA$ such that $\veca
+ \vecc + \N \matF \subseteq \vecb + \N \matG$~\cite{STDPAIR}. The set
of all proper pairs of an ideal $I$ is partially ordered $\prec$ by
inclusion. In other words, $(\veca, \matF) \prec (\vecb, \matG)$ if
$\veca+ \N \matF \subset \vecb+ \N \matG$. The \emph{standard pairs}
of an ideal $I$ are the maximal elements of the set of all proper
pairs of $I$ in this partial order. We denote by $\stdp(I)$ the set of
all standard pairs of an ideal $I$. 

We remark that our notation here differs from existing notation for standard pairs over polynomial rings. Over the polynomial ring $\field[x_{1},x_{2},\cdots, x_{n}]$, a pair is a tuple $(x^{\veca},V)$ where $x^{\veca}$ is a monomial $x_{1}^{a_{1}}x_{2}^{a_{2}}\cdots x_{n}^{a_{n}}$ for some integer vector $\veca=(a_{1},a_{2},\cdots, a_{n})$ and $V$ is a set of variables \cite{MR1949549, MR1339920}. From the viewpoint of affine semigroup rings, the polynomial ring is a special case when the underlying affine semigroup is generated by an $n\times n$ identity matrix $\matI$. Since the cone $\R_{\geq0}\matI$ is a simplicial cone, i.e., every subset of rays form a face, we may interpret $V$ as a face. The following example shows the different notations for the standard pairs of a monomial ideal $I = \langle x^{\left[\begin{smallmatrix} 1\\ 3 \\ 1 \end{smallmatrix}\right]}, x^{\left[\begin{smallmatrix} 1\\ 2 \\ 2 \end{smallmatrix}\right]},x^{\left[\begin{smallmatrix} 0\\ 3 \\ 2 \end{smallmatrix}\right]}, x^{\left[\begin{smallmatrix} 0\\ 2 \\ 3 \end{smallmatrix}\right]} \rangle$ in the polynomial ring $\field[x_{1},x_{2},x_{3}]$. 

In \macaulay,
\begin{verbatim}
i1 : R = QQ[x,y,z];
i2 : I = monomialIdeal(x*y^3*z, x*y^2*z^2, y^3*z^2, y^2*z^3)
                       3      2 2   3 2   2 3
o2 = monomialIdeal (x*y z, x*y z , y z , y z )

o2 : MonomialIdeal of R

i3 : standardPairs I           
o3 = {{1, {x, z}}, {y, {x, z}}, {1, {x, y}}, {z, {y}}, 
  2           2 2
{y z, {x}}, {y z , {}}}

o3 : List
\end{verbatim}
whereas in the given library \libraryname in \sage,
\begin{verbatim}
sage: load("~/stdPairs.spyx")
Compiling /Users/byeongsuyu/stdPairs.spyx...
sage: A = matrix(ZZ,[[1,0,0],[0,1,0],[0,0,1]])
sage: Q = affineMonoid(A)
sage: M = matrix(ZZ,[[1,1,0,0],[3,2,3,2],[1,2,2,3]])
sage: I = monomialIdeal(Q,M)
sage: I.standardCover()
{(): [([[0], [2], [2]]^T,[[], [], []])],
 (1,): [([[0], [0], [1]]^T,[[0], [1], [0]])],
 (0,): [([[0], [2], [1]]^T,[[1], [0], [0]])],
 (0, 2): [([[0], [1], [0]]^T,[[1, 0], [0, 0], [0, 1]]),
  ([[0], [0], [0]]^T,[[1, 0], [0, 0], [0, 1]])],
 (0, 1): [([[0], [0], [0]]^T,[[1, 0], [0, 1], [0, 0]])]}
\end{verbatim}

\subsection{Classes in \libraryname}
\label{subsec:class_aff_monoid}

\sloppy We implement three classes related to affine semigroup, semigroup ideal, and proper pair respectively. This implementation is based on \sage 9.1 with \texttt{Python} 3.7.3. and \texttt{4ti2} package.

\subsubsection{Class \texttt{affineMonoid}}
This class is constructed by using an integer matrix $\matA$. The name follows the convention of \sage which distinguishes monoid from semigroup. In \sage, $\matA$ can be expressed as a 2-dimensional \texttt{Numpy} array or an integer matrix of \sage. For example,

\begin{verbatim}
sage: A = matrix(ZZ,[[1,2],[0,2]])
sage: load("stdPairs.spyx")
Compiling ./stdPairs.spyx...
sage: Q = affineMonoid(A)
\end{verbatim}

generates $Q$ as a type of \texttt{affineMonoid}. This class has several member variables and member functions as explained below.

\begin{itemize}[leftmargin=*]
\item \texttt{Q.gens} stores a matrix generating an affine monoid $Q$ as \texttt{Numpy.ndarray} type. This may not be a minimal generating set of $Q$.
\item \texttt{Q.mingens} stores a minimal generating matrix of an affine monoid of $Q$.
\item \texttt{Q.poly} is a real cone $\R_{\geq 0}Q$ represented as a type of \texttt{Polyhedron} in \sage. If one generates $Q$ with \texttt{True} parameter, i.e.,
\begin{verbatim}
sage: A = matrix(ZZ,[[1,2],[0,2]])
sage: load("stdPairs.spyx")
Compiling ./stdPairs.spyx...
sage: Q = affineMonoid(A,True)
\end{verbatim}
then \texttt{Q.poly} is of a class of \normaliz integral polyhedron. This requires \texttt{PyNormaliz} package. See~\cite{NormalizSage} for more details.
\item \texttt{Q.faceLattice} is a finite lattice containing all faces of the affine semigroup. A face in the lattice is saved as a tuple storing column numbers of generators $\matA$. \texttt{Q.faceLattice} is of type of \texttt{Finite Lattice Poset} in \sage. For example, 
\begin{verbatim}
sage: Q.faceLattice
Finite lattice containing 5 elements with distinguished linear 
extension
sage: Q.faceLattice.list()
[(-1,), (), (0,), (1,), (0, 1)]
\end{verbatim}
\item \texttt{Q.indexToFace} is a \texttt{dictionary} type whose key is a face as a tuple, and whose item is a face as an element of \texttt{Q.poly}. For example,
\begin{verbatim}
sage: Q.indexToFace
{(-1,): A -1-dimensional face of a Polyhedron in ZZ^2,
 (): A 0-dimensional face of a Polyhedron in ZZ^2 
 defined as the convex hull of 1 vertex,
 (0,): A 1-dimensional face of a Polyhedron in ZZ^2 
 defined as the convex hull of 1 vertex and 1 ray,
 (1,): A 1-dimensional face of a Polyhedron in ZZ^2 
 defined as the convex hull of 1 vertex and 1 ray,
 (0,1): A 2-dimensional face of a Polyhedron in ZZ^2 
  defined as the convex hull of 1 vertex and 2 rays}
\end{verbatim}
\item \texttt{Q.integralSupportVectors} is a \texttt{dictionary} type whose key is a face $\matF$ as a tuple and whose item is a set of the integral support functions of facets containing $\matF$ as a vector form. An \emph{integral support function} $\phi_{\matH}$ of a facet $\matH$ is a linear function $\phi_{\matH}:\R^{d} \to \R$ such that $\phi_{\matH}(\Z^{d}) =\Z$, $\phi_{\matH}(\veca) \geq 0$ for all column vectors $\veca$ of generators $A$, and $\phi_{\matH}(\veca) =0$ if and only if $\veca \in \matH$. By linearity, $\phi_{\matH}(\veca)=\vecb \cdot \veca$ for some rational vector $\vecb$. We call $\vecb$ as an \emph{integral support vector}. Each item of \texttt{Q.integralSupportVectors} returns a matrix whose rows are integral support vectors of facets containing the key.
For example, 
\begin{verbatim}
sage: Q.integralSupportVectors
{(): array([[ 0,  1],
        [ 1, -1]]),
 (0,): array([[0, 1]]),
 (1,): array([[ 1, -1]]),
 (0, 1): array([], dtype=float64)}
\end{verbatim}
See~\cite[Definition 2.1]{STDPAIR} for the precise definition of a (primitive) integral support function.
\item \texttt{Q.isEmpty()} returns a boolean value indicating whether $Q$ is a trivial affine semigroup or not. A \emph{trivial affine semigroup} is an empty set as an affine semigroup.
\item \texttt{Q.isPointed()} returns a boolean value indicating whether $Q$ is a pointed affine semigroup or not.
\item \texttt{Q.isElement(vector $\vecb$)} returns nonnegative integral inhomogeneous solutions (minimal integer solutions) of $\matA  \mathbf{x} = \vecb$ using \texttt{zsolve} in~\cite{4ti2}. If $\vecb$ is not an element of an affine semigroup $Q$, then it returns an empty matrix.
\item \texttt{Q.IntersectionOfPairs(vector  $\veca$, tuple $F$, vector $\vecb$, tuple $G$)} gives nonnegative integral inhomogeneous solutions (minimal integer solutions) of $$\left[\begin{smallmatrix} \matF & -\matG \end{smallmatrix}\right] \left[\begin{smallmatrix} \mathbf{u} \\ \mathbf{v} \end{smallmatrix}\right] = \vecb-\veca$$ using \texttt{zsolve} in~\cite{4ti2}, where $\matF$ and $\matG$ are submatrices of $A$ corresponding to its tuple forms respectively. This equation has solutions if and only if two pairs $(\veca, \matF)$ and $(\vecb, \matG)$ have a common element. This returns an empty matrix if it has no solution.
\item \texttt{Q.Face(tuple index)} returns a face as a submatrix of a generator $\matA$ corresponding to a given tuple \texttt{index}. For example,
\begin{verbatim}
sage: Q.Face((1,))
array([[2],
       [2]])
\end{verbatim}
\item \texttt{Q.IndFace(matrix face)} returns a face as a tuple of indices of column vectors of a generator $\matA$ corresponding to a given submatrix \texttt{face} of $\matA$. For example,
\begin{verbatim}
sage: M = matrix(ZZ,[[2],[2]])
sage: Q.IndFace(M)
(1,)
\end{verbatim}
\item \texttt{Q.primeIdeal(tuple face)} returns a prime ideal corresponding to the face represented by \texttt{face} as an object of \texttt{monomialIdeal}.
\begin{verbatim}
sage: Q.primeIdeal((1,))
An ideal whose generating set is 
[[1]
 [0]]
\end{verbatim}
\item \texttt{Q.save(string path)} saves the given \texttt{affineMonoid} object as a text file. This can be loaded again using \texttt{load\_stdPairs(string path)}, which will be explained in \cref{subsec:global_methods}. If the save is successful, then it returns 1.
\item \texttt{Q.hashstring} is a string unique for the (mathematically) same affine monoid.
\end{itemize}

Moreover, one can directly compare affine semigroups using the equality operator \texttt{==} in \sage.

\subsubsection{Class \texttt{monomialIdeal}}
\label{subsec:class_ideal}

This class is constructed by an affine semigroup $Q$ and generators of an ideal as a matrix form, say $\matM$, which is a 2-dimensional \texttt{Numpy} array or an integer matrix of \sage. For example,

\begin{verbatim}
sage: M = matrix(ZZ,[[4,6],[4,6]])
sage: I = monomialIdeal(Q,M)
sage: I
An ideal whose generating set is 
[[4]
 [4]]
\end{verbatim}

As shown in the example above, this class stores only minimal generators of the ideal. The member variables and functions are explained below.
\begin{itemize}[leftmargin=*]
\item \texttt{I.gens} shows a (minimal) generators of $I$ as a \texttt{Numpy} array form. 
\item \texttt{I.ambientMonoid} stores the ambient affine semigroup of $I$.
\item \texttt{I.isPrincipal()} returns a boolean value indicating whether $I$ is principal or not. Likewise, \texttt{I.isEmpty()}, \texttt{I.isIrreducible()}, \texttt{I.isPrimary()}, \texttt{I.isPrime()}, and \texttt{I.isRadical()} return a boolean value indicating whether $I$ has the properties implied by their name or not.
\item \texttt{I.isElement(vector $\vecb$)} returns nonnegative integral inhomogeneous solutions (minimal integer solutions) of $\matA  \mathbf{x} = \vecb-\veca$ for each generator $\veca$ of $I$ using \texttt{zsolve} in~\cite{4ti2}. If $\vecb$ is an element of ideal, then it returns a list $[\mathbf{x}, \veca]$ for some generator $\veca$ such that $\veca + \matA \mathbf{x}^{T}=\vecb$. Otherwise, it returns an empty matrix.
\item \texttt{I.isStdMonomial(vector $\vecb$)} returns a boolean value indicating whether the given vector $\vecb$ is a standard monomial or not.
\item \texttt{radicalIdeal(I)} returns the radical of $I$ as an \texttt{monomialIdeal} object.
\item \texttt{I.standardCover()} returns the \emph{standard cover} of $I$. The definition of standard cover and algorithms will be given in \cref{subsec:principal_ideal}.
\item \texttt{I.overlapClasses()} returns the overlap classes of $I$. An \emph{overlap class} of an ideal $I$ is a set of standard pairs such that their representing submonoids intersect nontrivially. Moreover, \texttt{I.maximalOverlapClasses()} returns all maximal overlap classes of $I$. See~\cite[Section 3]{STDPAIR} for the detail.
\item \texttt{I.associatedPrimes()} returns all associated prime ideals of $I$ as a \texttt{dictionary} type in Python. In other words, the function returns a dictionary whose keys are faces of the affine semigroup as \text{tuple} form and whose value is a list containing associated prime ideals corresponding to the face in its key.
\item \texttt{I.multiplicity(ideal P or face $\matF$)} returns a multiplicity of an associated prime $P$ over the given ideal $I$. Also, the method takes the face $\matF$ (as a tuple) corresponding to a prime ideal $P$ as input instead.
\item \texttt{I.irreducibleDecomposition()} returns the irredundant irreducible primary decomposition of $I$ as a list. Since it takes a lot of time, this library also provides a method which calculates only one irreducible primary component corresponding to a maximal overlap class. This class is \texttt{I.irreducibleComponent(tuple face, list ov\_class)}, where \texttt{ov\_class} is a maximal overlap class of $I$ and \texttt{face} is a face corresponding to the overlap class \texttt{ov\_class}.
\item \texttt{I.intersect(J)} returns an intersection of two ideals $I$ and $J$ as a \texttt{monomialIdeal} object. Likewise, addition \texttt{+}, multiplication \texttt{$\ast$}, and comparison \texttt{==} are defined between two objects. The following example shows an addition of two monomial ideals in \sage.
\begin{verbatim}
sage: I = monomialIdeal(Q,matrix(ZZ,[[4,6],[4,6]]))
sage: J = monomialIdeal(Q,matrix(ZZ,[[5],[0]]))
sage: I.intersect(J)
An ideal whose generating set is 
[[9]
 [4]]
sage: I+J
An ideal whose generating set is 
[[5 4]
 [0 4]]
\end{verbatim}
\item \texttt{I.save(string path)} saves the given \texttt{monomialIdeal} object $I$ as a text file. Especially, it saves not only the generators of $I$, but also its standard cover, overlap classes, associated primes, and irreducible primary decompositions if they were calculated. This can be loaded again using \texttt{load\_stdPairs(string path)}, which will be explained in \cref{subsec:global_methods}. If the save is successful, then it returns 1.
\item \texttt{I.hashstring} is a string unique for the (mathematically) same ideal.
\end{itemize}

\subsubsection{Class \texttt{properPair}}
\label{subsec:class_pair}

A proper pair $(\veca, \matF)$ of an ideal $I$ can be declared in \sage by specifying an ideal $I$, a standard monomial $\veca$ as a matrix form (or \texttt{Numpy} 2D array), and a face $\matF$ as a tuple. If $(\veca, \matF)$ is not proper, then \sage calls a \texttt{ValueError}. The following example shows two ways of defining a proper pair. 

\begin{verbatim}
sage: I = monomialIdeal(Q,matrix(ZZ,[[4,6],[4,6]]))
sage: PP = properPair(np.array([2,0])[np.newaxis].T,(0,),I)
sage: PP
([[2], [0]]^T,[[1], [0]])
sage: QQ = properPair(np.array([2,0])[np.newaxis].T,(0,),I, True)
sage: QQ
([[2], [0]]^T,[[1], [0]])
\end{verbatim}

The second line tests whether the pair is a proper pair of the given ideal $I$ or not before generating \texttt{PP}. However, the fourth line generates \texttt{QQ} without such a test. Use the third parameter with \texttt{True} only if the generating pair is proper a priori. In any case, each \texttt{PP} and \texttt{QQ} denotes proper pair whose initial monomial is $\left[\begin{smallmatrix}2 \\ 0\end{smallmatrix}\right]$ and whose face is $\left[\begin{smallmatrix}1 \\ 0\end{smallmatrix}\right]$. 

The member variables and functions are explained below. We assume that \texttt{PP} denotes a proper pair $(\veca,\matF)$.

\begin{itemize}[leftmargin=*]
\item \texttt{PP.monomial}, \texttt{PP.face}, and \texttt{PP.ambientIdeal} return the initial monomial $\veca$ (as \texttt{Numpy} 2D array), the face $\matF$ (as a \texttt{tuple}), and its ambient ideal (as an object of \texttt{affineMonoid}) respectively.
\item \texttt{PP.isMaximal()} returns a boolean value indicating whether the given pair is maximal with respect to the divisibility of proper pairs of the ambient ideal.
\item \texttt{PP.isElement(vector $\vecb$)} returns nonnegative integral inhomogeneous solutions (minimal integer solutions) of $\veca +\matF  \mathbf{x} = \vecb$ using \texttt{zsolve} in~\cite{4ti2}. If $\vecb$ is not an element of the submonoid $\veca + \N\matF$, then it returns an empty matrix.
\item \texttt{divides(pair PP, pair QQ)} returns a matrix whose row $\mathbf{u}$ is a minimal solution of $\veca+\matA\mathbf{u}+\N\matF = \vecb+\N\matG$ if $PP = (\veca, \matF)$ and $QQ = (\vecb, \matG)$. The returned value is a nonempty matrix if and only if a pair $PP$ divides a pair $QQ$. For example,
\begin{verbatim}
sage: I = monomialIdeal(Q,matrix(ZZ,[[4,6],[4,6]]))
sage: PP = properPair(matrix(ZZ,[[2],[0]]),(0,),I)
sage: PP
([[2], [0]]^T,[[1], [0]])
sage: QQ = properPair(matrix(ZZ,[[2],[0]]),(0,),I,True)
sage: QQ
([[2], [0]]^T,[[1], [0]])
sage: divides(PP,QQ)
[0 0 0]
\end{verbatim}
since $PP=QQ$. 
\item Like \texttt{affineMonoid} or \texttt{monomialIdeal}, one can directly compare proper pairs using the equality operator \texttt{==} in \sage.
\item \texttt{PP.hashstring} is a string unique for the (mathematically) same proper pair.
\end{itemize}

\subsection{Global methods}
\label{subsec:global_methods}

Global methods are introduced below.
\begin{itemize}[leftmargin=*]
\item \texttt{unique\_monoids(list L)}, \texttt{unique\_ideals(list L)}, \texttt{unique\_pairs(list L)} and \texttt{unique\_np\_arrays(list L)} receive a list of variables whose type is \texttt{affineMonoid},  \texttt{monomialIdeal}, \texttt{properPair} and \texttt{numpy} 2D array respectively, and return a list of (mathematically) distinct objects.
\item \texttt{save\_cover(dict cover, affineMonoid Q, string path)} saves a \emph{cover}, a \texttt{dict} type object whose keys are faces (as \texttt{tuple} objects) of $Q$ and whose values are a list of pairs (as \texttt{properPair} objects) into a file located in \texttt{path}. Note that all pairs are saved as proper pairs under the empty ideal. If the save is successful, then it returns 1.
\item \texttt{load\_stdPairs(string path)} loads an \texttt{affineMonoid} object, an \texttt{monomialIdeal} object, or a cover stored in a file located in \texttt{path}. It is useful for users who want to avoid repeating calculation which was previously done. For example, 
\begin{verbatim}
sage: I = load_stdPairs("/Users/byeongsuyu/ex_ideal.txt")
sage: I.ambientMonoid
An affine semigroup whose generating set is 
[[1 1 2 3]
 [1 2 0 0]]
sage: I
An ideal whose generating set is 
[[3 5 6]
 [2 1 1]]
 sage: I.irreducibleDecomposition()
[An ideal whose generating set is 
 [[3 4 2 3 5]
  [2 0 4 4 0]], An ideal whose generating set is 
 [[2 3]
  [0 0]], An ideal whose generating set is 
 [[1 1]
  [1 2]]]
sage:
\end{verbatim}
shows that the library loads pre-calculated irreducible decomposition of the given ideal. This save file can be obtained via \url{https://github.com/byeongsuyu/StdPairs}.
\end{itemize}

\section{Implementation of an algorithm finding standard pairs}
\label{section:algorithms}

\subsection{Case 1: Principal Ideal}
\label{subsec:principal_ideal}

A \emph{cover} of standard monomials of an ideal $I$ is a set of
proper pairs of $I$ such that the union of all subsemigroup
$\veca+\N\matF$ corresponding to an element $(\veca,\matF)$ of the
cover is equal to the set of all standard monomials. The
\emph{standard cover} of an ideal $I$ is a cover of $I$ whose elements
are standard pairs. The standard cover of a monomial ideal $I$ is
unique by the maximality of standard pairs among all proper pairs of
$I$. A key idea in~\cite[Section 4]{STDPAIR} is to construct covers
containing all standard pairs. Once  a cover is obtained, we can  then
produce the standard cover. 

The following result helps to compute the standard cover in the
special case of a principal ideal.
\begin{theorem}[{{\cite[Theorem 4.1]{STDPAIR}}}]

\label{thm:pair_difference}
  Let $\vecb, \vecb' \in \N \matA$ and let $\matG, \matG'$ be faces of $A$ such that
  $\matG\cap \matG' = \matG$. There exists an algorithm to compute a finite collection $C$ of pairs over faces of $\matG$ such that
  \[
    (\vecb+\matG) \smallsetminus (\vecb'+\matG') = \cup_{(\veca,\matF) \in C} (\veca+\matF).
  \]
\end{theorem}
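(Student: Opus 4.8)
\emph{Proof plan.}
The hypothesis $\matG\cap\matG'=\matG$ is precisely the statement that $\matG$ is a subface of $\matG'$, so $\N\matG\subseteq\N\matG'$, $\Z\matG\subseteq\Z\matG'$, and every face of $\matG$ is a face of $\matA$. The plan is to convert membership in $\vecb'+\N\matG'$ into a membership question inside the \emph{single} affine semigroup $\N\matG'$, and then to describe the complement using only two pieces of data attached to $\N\matG'$: the facets of the cone $\R_{\geq0}\matG'$ together with their primitive integral support functions, and the finite set of lattice points of that cone missing from $\N\matG'$.

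First we would decide whether $(\vecb+\N\matG)\cap(\vecb'+\N\matG')=\varnothing$ by testing feasibility of the linear Diophantine system $\matG\vecu-\matG'\vecv=\vecb'-\vecb$ over $\vecu,\vecv\ge\mathbf{0}$, which is the computation behind \texttt{IntersectionOfPairs} (done via \texttt{zsolve} in~\cite{4ti2}). If it is infeasible we return $C=\{(\vecb,\matG)\}$. Otherwise $\vecd:=\vecb-\vecb'\in\Z\matG'$, and a general element of $\vecb+\N\matG$, written $\vecb+\mathbf{z}$ with $\mathbf{z}\in\N\matG\subseteq\Z\matG'$, lies in $\vecb'+\N\matG'$ if and only if $\vecd+\mathbf{z}\in\N\matG'$, the point $\vecd+\mathbf{z}$ always lying in $\Z\matG'$. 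A point of $\Z\matG'$ fails to lie in $\N\matG'$ in exactly one of two ways: it is outside the cone $\R_{\geq0}\matG'$, or it lies in the cone but belongs to the finite set $\mathcal{H}:=(\Z\matG'\cap\R_{\geq0}\matG')\smallsetminus\N\matG'$ (finite because $\N\matG'$ is a pointed affine semigroup, cf.~\cite{CCA}). Hence
\[
(\vecb+\N\matG)\smallsetminus(\vecb'+\N\matG')
=\bigl\{\,\vecb+\mathbf{z}:\mathbf{z}\in\N\matG,\ \vecd+\mathbf{z}\notin\R_{\geq0}\matG'\,\bigr\}
\ \cup\ \bigl\{\,\vecb+\mathbf{z}:\mathbf{z}\in\N\matG,\ \vecd+\mathbf{z}\in\mathcal{H}\,\bigr\},
\]
and the second set, contained in $\vecb+\bigl((\mathcal{H}-\vecd)\cap\N\matG\bigr)$, is finite; each of its points is a pair over the trivial face of $\matG$.

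For the first set we would use integral support functions. Let $\psi_1,\dots,\psi_m$ be the primitive integral support functions of the facets $\matH_1,\dots,\matH_m$ of $\R_{\geq0}\matG'$. Then $\vecd+\mathbf{z}\notin\R_{\geq0}\matG'$ exactly when $\psi_j(\mathbf{z})<-\psi_j(\vecd)$ for some $j$. If $\matG\subseteq\matH_j$, then $\psi_j$ vanishes on $\N\matG$ and the inequality reads $0<-\psi_j(\vecd)$, holding either for all $\mathbf{z}$ or for none; feasibility in the previous step forces $\psi_j(\vecd)\ge0$ here, so these $j$ contribute nothing. If $\matG\not\subseteq\matH_j$, then $\psi_j$ is nonzero and nonnegative on $\N\matG$, and the relevant $\mathbf{z}$ are those with $\psi_j(\mathbf{z})\in\{0,1,\dots,-\psi_j(\vecd)-1\}$ (a range empty unless $\psi_j(\vecd)<0$). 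For each value $v$ in this range, the slice $\{\mathbf{z}\in\N\matG:\psi_j(\mathbf{z})=v\}$ is closed under adding elements of $\N(\matG\cap\matH_j)$, where $\matG\cap\matH_j$ is the subface of $\matG$ on which $\psi_j$ vanishes; so by Dickson's lemma it equals $\bigcup_i\bigl(\matG\vecu_i+\N(\matG\cap\matH_j)\bigr)$ for the finitely many minimal nonnegative solutions $\vecu_i$ of $\psi_j(\matG\vecu)=v$, again obtained from \texttt{zsolve}. Translating these pieces by $\vecb$ and adjoining the finitely many points of $\mathcal{H}$ yields the finite collection $C$ of pairs over faces of $\matG$ with the asserted union.

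The main obstacle is precisely the ``non-normal'' part of this bookkeeping: one must be sure that $\mathcal{H}$ is finite and effectively computable, so that only finitely many singleton pairs arise, and that each cone-slice of $\N\matG$ is genuinely a finitely generated module over an honest subface $\matG\cap\matH_j$ of $\matG$, so that Dickson's lemma applies and the faces appearing in $C$ really are faces of $\matG$. Once these points are secured, the remainder is linear Diophantine feasibility and minimal-solution enumeration, which the implementation already delegates to \texttt{zsolve} in~\cite{4ti2}.
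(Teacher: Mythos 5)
Your overall strategy is genuinely different from the one in \cite{STDPAIR}: there, the set $J=\{\vecu\in\N^{q}:\vecb+\matG\vecu\in\vecb'+\N\matG'\}$ (with $q$ the number of columns of $\matG$) is observed to be a monomial ideal of $\N^{q}$, computable from the minimal nonnegative solutions of $\matG\vecu-\matG'\vecv=\vecb'-\vecb$, and the collection $C$ is obtained by pushing forward the standard pairs of $J$ in the polynomial ring $\field[\N^{q}]$ along $\vecu\mapsto\vecb+\matG\vecu$. That route never has to examine the cone $\R_{\geq0}\matG'$ or the failure of normality of $\N\matG'$, because membership in $\vecb'+\N\matG'$ is encoded once and for all in the ideal $J$.

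The gap in your argument is the claim that $\mathcal{H}=(\Z\matG'\cap\R_{\geq0}\matG')\smallsetminus\N\matG'$ is finite. Pointedness does not give this. Take $\matG'=\matA$ with columns $(1,0),(1,1),(1,3)$; then $\Z\matA=\Z^{2}$, the cone is $\{(x,y):0\le y\le 3x\}$, and the point $(n,3n-1)$ lies in the cone for every $n\ge 1$ but never in $\N\matA$: the equations $a+b+c=n$ and $b+3c=3n-1$ give $a=2(c-n)+1$, which is negative whenever $b=3(n-c)-1\ge 0$. So $\mathcal{H}$ is infinite, your ``second set'' cannot be swept into finitely many singleton pairs, and the procedure as described does not terminate with a finite $C$. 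The holes of a pointed affine semigroup do admit a finite description --- they form a finite union of translates $\vecc+\N\matF'$ of faces $\matF'$ of $\matG'$ --- but invoking that fact forces you to decompose $(\vecb+\N\matG)\cap(\vecb'+\vecc+\N\matF')$ where $\matF'$ need not contain $\matG$, which is essentially another instance of the problem being solved; this is precisely the bookkeeping the polynomial-ring reduction avoids. The remainder of your argument (the facet-by-facet slicing of $\{\mathbf{z}\in\N\matG:\psi_{j}(\vecd+\mathbf{z})<0\}$ into finitely many modules over the faces $\matG\cap\matH_{j}$, via minimal solutions and Dickson's lemma) is sound.
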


The \emph{pair difference} of the pairs $(\vecb, \matG)$ and $(\vecb', \matG')$ is a finite collection of pairs over faces of $\matG$ given by \cref{thm:pair_difference}. 

\begin{corollary}
\label{cor:principal_ideal_case}
Given a principal ideal $I = \langle \vecb \rangle$, the pair difference of pairs $(0, \matA)$ and $(\vecb, \matA)$ is the standard cover of $I$.
\end{corollary}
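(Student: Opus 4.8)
The plan is to peel off the definitions so that the statement becomes an immediate application of \cref{thm:pair_difference}, and then to upgrade ``cover'' to ``standard cover''. First I would identify the two subsemigroups at play: the pair $(0,\matA)$ represents $0+\N\matA=\N\matA$, while $(\vecb,\matA)$ represents $\vecb+\N\matA$, which under the correspondence between monomial ideals of $\field[\N\matA]$ and ideals of $\N\matA$ is exactly the principal ideal $I=\langle\vecb\rangle$. Since $\matA\cap\matA=\matA$, the hypothesis $\matG\cap\matG'=\matG$ of \cref{thm:pair_difference} is satisfied with $\matG=\matG'=\matA$, and the theorem hands me a finite collection $C$ of pairs over faces of $\matA$ with
\[
  \bigcup_{(\veca,\matF)\in C}\bigl(\veca+\N\matF\bigr)=\bigl(0+\matA\bigr)\smallsetminus\bigl(\vecb+\matA\bigr)=\N\matA\smallsetminus I=\std(I).
\]

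Next I would observe that $C$ is already a cover of $I$ in the sense of \cref{subsec:principal_ideal}: each $(\veca,\matF)\in C$ occurs in the union above, so $\veca+\N\matF\subseteq\std(I)$ and the pair is proper; and the displayed identity says exactly that the corresponding subsemigroups exhaust $\std(I)$.

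It remains to argue that $C$ is the standard cover, i.e., that each of its members is a standard pair --- a maximal proper pair --- and that no standard pair is missing; by the uniqueness recalled in \cref{subsec:principal_ideal} this identifies $C$ with $\stdp(I)$. For maximality I would argue as follows: if a proper pair $(\vecb',\matG')$ satisfied $\veca+\N\matF\subsetneq\vecb'+\N\matG'\subseteq\std(I)$ for some $(\veca,\matF)\in C$, then $\matG'\supseteq\matF$; enlarging $\matF$ to a strictly larger face based at $\veca$ would produce an element of $\veca+\N\matG'$ lying in the removed set $\vecb+\N\matA$ --- which is precisely what the facet-by-facet removal in the proof of \cref{thm:pair_difference} rules out --- and with $\matG'=\matF$ the base $\veca$ cannot be lowered along $\matF$ for the same reason; either way we reach a contradiction. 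For the converse --- that every standard pair appears in $C$ --- I would use that, for a principal ideal, the collection produced is irredundant, so a maximal proper pair, being forced to sit inside a single piece of the cover, must coincide with that piece.

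The step I expect to be the main obstacle is this last one: establishing that the pairs emitted by the algorithm of \cref{thm:pair_difference} have inextendable faces and minimal bases and that the emitted collection is irredundant. This draws on the internal workings of that algorithm --- the recursive slicing of $\N\matA$ along the facets containing a given face, expressed via integral support functions --- and not merely its input/output behaviour. A cleaner alternative would be to prove separately that any irredundant cover of $I$ consisting only of maximal proper pairs coincides with $\stdp(I)$ --- roughly because a maximal proper pair must sit inside a single piece of such a cover and hence equals it --- and then to verify that \cref{thm:pair_difference} outputs exactly such a cover on the input $(0,\matA)$, $(\vecb,\matA)$.
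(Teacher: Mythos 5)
Your first step (that the pair difference of $(0,\matA)$ and $(\vecb,\matA)$ covers $\std(I)$ and consists of proper pairs) matches the paper and is fine, as is the appeal to uniqueness of the standard cover. The genuine gap is exactly the step you flag as ``the main obstacle'': showing that every pair emitted by \cref{thm:pair_difference} is a \emph{maximal} proper pair. You do not prove this; you assert that enlarging the face or lowering the base ``is precisely what the facet-by-facet removal in the proof of \cref{thm:pair_difference} rules out,'' but that is not an argument, and the contradiction you propose cannot arise the way you describe: a putative larger proper pair $(\vecd,\matG)$ satisfies $\vecd+\N\matG\subseteq\std(I)$ by definition, so it never meets the removed set $\vecb+\N\matA$, and the covering identity alone gives no obstruction. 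Moreover, maximality of the output pairs is not an input/output feature of \cref{thm:pair_difference} in general --- for non-principal ideals the pair differences only give a cover, which is why \cref{prop:cover_to_std_pairs} is needed --- so the corollary is precisely the claim that in the principal case the emitted pairs happen to be maximal, and this must be argued from the algorithm's actual internal structure. You also misidentify that structure: the mechanism of \cref{thm:pair_difference} is not recursive slicing along facets via integral support functions (that is the content of the Lemma 4.2/\cref{prop:cover_to_std_pairs} machinery), but rather solving an integer linear system and computing standard pairs of an auxiliary monomial ideal $J$ in the polynomial ring $\field[\N^{n}]$.

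The paper closes this gap as follows: each emitted pair has the form $(\vecc,F)$ with $\vecc=\matA\vecu$, where $(x^{\vecu},\{x_{1},\dots,x_{m}\})$ is a standard pair of $J\subseteq\field[\N^{n}]$ and $F$ is the corresponding face. Given a proper pair $(\vecd,\matG)$ of $I$ with $F\subseteq\matG$ and $\vecd+\vecg=\vecc$, one writes $\vecd=\matA\vecw$, uses pointedness of $\N\matA$ to compare $\vecw$ with $\vecu$, checks that $(x^{\vecw},\{x_{1},\dots,x_{m}\})$ is again a proper pair of $J$, and then invokes maximality of the standard pair of $J$ in the polynomial ring to force $\vecw=\vecu$, hence $\vecd=\vecc$, and finally $\matG=F$ (otherwise one could adjoin a variable $x_{j}$ and strictly enlarge a standard pair of $J$). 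Your fallback plan (show any irredundant cover by maximal proper pairs equals $\stdp(I)$, then verify the algorithm outputs one) still leaves this same verification undone, and its first half also relies on the unproved claim that a maximal proper pair must sit inside a single piece of a cover. As written, the proposal reduces the corollary to the hard step without supplying it.
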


\begin{proof}
\cref{thm:pair_difference} implies that the pair difference is a cover of $I$. To see it is the standard cover, suppose that the ambient affine semigroup is generated by  $\matA = \left[\begin{smallmatrix} \veca_{1}& \cdots &\veca_{n}\end{smallmatrix}\right].$ Let $(\vecc, F)$ be a proper pair in the pair difference. Without loss of generality, we assume that $F = \left[\begin{smallmatrix} \veca_{1}& \cdots &\veca_{m}\end{smallmatrix}\right]$  for some $m<n$ by renumbering indices. By the proof of \cref{thm:pair_difference} in~\cite{STDPAIR}, $\vecc = A \cdot \vecu$ where $x^{\vecu} \in \field[\N^{n}]$ is a standard monomial such that $(x^{\vecu}, \{ x_{1}, \cdots, x_{m}\})$ is a standard pair of some monomial ideal $J$ in $\field[\N^{n}]$. 

Suppose that there exists $(\vecd, \matG)$ such that $F \subseteq \matG$ and $\vecd + \vecg = \vecc$ for some $\vecg \in G$. Since $\vecd \in \N \matA $, $\vecd= A \vecw$ for some $\vecw \in \N^{n}$. Since $A$ is pointed, $\vecw$ is coordinatewise less than $\vecu$. Thus, $(x^{\vecw}, \{ x_{1}, \cdots, x_{m}\}  )$ contains $(x^{\vecu}, \{ x_{1}, \cdots, x_{m}\})$. Lastly, $(x^{\vecw}, \{ x_{1}, \cdots, x_{m}\})$ is a proper pair of $J$, otherwise, there exists $x^{\vecv} \in \field[x_{1}, \cdots, x_{m}] \subseteq \field[\N^{n}]$ such that $x^{\vecw+\vecv} \in J$. Then, $x^{\vecg}x^{\vecw+\vecv} \in J \implies x^{\vecu+ \vecv} \in J \cap (x^{\vecu}, \{ x_{1}, \cdots, x_{m}\}) = \emptyset$ leads to a contradiction.

Thus, by maximality of the standard pair, $\vecw = \vecu$. This implies $\vecd = \vecc$. Moreover, $G=F$, otherwise there exists $j \in  \{ 1,2,\cdots, n\} \smallsetminus \{ 1,\cdots, m\}$ such that $x^{\vecu}x_{j}^{l} \not\in J$ for any $l$, which implies that $(x^{\vecu}, \{ x_{1}, \cdots, x_{m},x_{j}\})$ is a proper pair of $J$ strictly containing a standard pair $(x^{\vecu}, \{ x_{1}, \cdots, x_{m}\})$ of $J$, a contradiction.
\end{proof}

\cref{thm:pair_difference} is implemented as a method \texttt{pair\_difference(($\vecb,\matF$), ($\vecb',\matF'$))} in the library \libraryname. Two input arguments should be the type of \texttt{properPair}. It returns the pair difference of pairs $(\vecb,\matF)$ and $(\vecb',\matF')$ with \texttt{dictionary} type, called \texttt{Cover}. \texttt{Cover} classifies pairs by its faces. For example, the code below shows the pair difference of pairs $(0, \matA)$ and $((0,2),\matA)$, which are 

$$ \left( 0, \begin{bmatrix} 2 \\ 0 \end{bmatrix}\right), \left( \begin{bmatrix} 0 \\ 1\end{bmatrix}, \begin{bmatrix} 2 \\ 0 \end{bmatrix}\right), \left( \begin{bmatrix} 1 \\ 2\end{bmatrix}, \begin{bmatrix} 2 \\ 0 \end{bmatrix}\right) \text{, and } \left( \begin{bmatrix} 1 \\ 1\end{bmatrix}, \begin{bmatrix} 2 \\ 0 \end{bmatrix}\right)$$

\begin{verbatim}
sage: B = affineMonoid(matrix(ZZ, [[2,0,1],[0,1,1]]))
sage: I = monomialIdeal(B, matrix(ZZ,0))
sage: C= properPair(np.array([[0,0]]).T, (0,1,2), I ) 
sage: D= properPair(np.array([[0,2]]).T, (0,1,2), I ) 
sage: print(pair_difference(C,D))
{(0,): [([[1], [1]]^T,[[2], [0]]), ([[1], [2]]^T,[[2], [0]]), 
([[0], [1]]^T,[[2], [0]]), ([[0], [0]]^T,[[2], [0]])]}
\end{verbatim}

By \cref{cor:principal_ideal_case}, it is a set of all standard pairs of an ideal $I = \langle (0,2)\rangle$ in an affine semigroup $\N \left[\begin{smallmatrix} 2& 0 & 1 \\ 0 & 1 & 1\end{smallmatrix}\right]$.

\texttt{pair\_difference(($\vecb,\matF$), ($\vecb',\matF'$))} uses \texttt{standardPairs} of \macaulay internally to find standard pairs of a polynomial ring, which is implemented by~\cite{MR1949549}. Briefly, this method \texttt{pair\_difference(($\vecb,\matF$), ($\vecb',\matF'$))} calculates minimal solution of the integer linear system 
$$ \begin{bmatrix} F & -F'\end{bmatrix} \begin{bmatrix} \vecu \\ \vecv\end{bmatrix} = \vecb'-\vecb$$
using \texttt{zsolve} in \texttt{4ti2}. The solutions form an ideal $J$ of a polynomial ring in the proof of \cref{thm:pair_difference} on \macaulay. \texttt{standardPairs} derives standard pairs of $J$. Lastly, the method \texttt{pair\_difference} constructs proper pairs based on the standard pairs of $J$, and classify the proper pairs based on their faces and return the pair difference. 

\subsection{Case 2: General ideal}
\label{subsec:general_ideal}

\cite[Proposition 4.4]{STDPAIR} gives an algorithm to find the standard cover of non-principal monomial ideals.
\begin{proposition}[{{\cite[Proposition 4.4]{STDPAIR}}}]
\label{prop:cover_to_std_pairs}
Let $I$ be a monomial ideal in $\field[\N \matA]$. There is an algorithm whose input is a cover of the standard monomials of $I$, and whose output is the standard cover of $I$.
\end{proposition}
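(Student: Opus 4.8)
The plan is to transform a cover into the standard cover by a terminating sequence of \emph{local enlargements}. As long as the current cover $C$ contains a pair that is not a standard pair, we pick such a pair $(\veca,\matF)$, compute a standard pair $(\vecb,\matG)$ with $(\veca,\matF)\prec(\vecb,\matG)$, and set $C\leftarrow(C\smallsetminus\{(\veca,\matF)\})\cup\{(\vecb,\matG)\}$. Two structural facts make this work: every proper pair of $I$ lies below a standard pair, so the replacement is always available; and a cover of $\std(I)$ all of whose members are standard pairs is necessarily \emph{the} standard cover — this is the uniqueness recalled in \cref{subsec:principal_ideal}, and one also uses that $\stdp(I)$ itself is such a cover, since each singleton $\{\vecc\}$ with $\vecc\in\std(I)$ is a proper pair (with the trivial face) and hence lies below a standard pair.

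The key lemma is that the order $\prec$ on proper pairs of $I$ has no infinite strictly increasing chains. In a chain $(\veca_1,\matF_1)\prec(\veca_2,\matF_2)\prec\cdots$ one has $\N\matF_1\subseteq\N\matF_2\subseteq\cdots$, so, the faces of $\N\matA$ forming a finite lattice, the face part is eventually constant, say $\matF_k=\matF$ for $k\ge k_0$; then $\veca_k\in\veca_{k+1}+\N\matF$ with $\veca_k\ne\veca_{k+1}$, and since $\N\matA$ is pointed it carries a grading $\deg\colon\N\matA\to\N$ that is positive on nonzero elements and has finite graded pieces, whence $\deg\veca_{k_0}>\deg\veca_{k_0+1}>\cdots\ge 0$ is finite. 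Consequently every proper pair is $\prec$ a maximal one, i.e.\ a standard pair. Moreover, if $(\veca,\matF)\prec(\vecb,\matG)$ then $\matF\subseteq\matG$ (a nonzero element of $\N\matF\smallsetminus\N\matG$ together with the element of $\N\matG$ realizing $\veca\in\vecb+\N\matG$ would lie in $\N\matG$ and also, since $\N\matA\smallsetminus\N\matG$ is an ideal, outside $\N\matG$) and $\vecb\in\veca-\N\matG$; conversely every $(\vecb,\matG)$ with $\matF\subseteq\matG$ and $\vecb\in(\veca-\N\matG)\cap\N\matA$ that is proper for $I$ satisfies $(\veca,\matF)\prec(\vecb,\matG)$. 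Since $(\veca-\N\matG)\cap\N\matA$ is finite (again by the grading) and there are finitely many faces $\matG\supseteq\matF$, the proper pairs above $(\veca,\matF)$ form a finite, explicitly enumerable set, and deciding whether a candidate $(\vecb,\matG)$ is proper for $I=\langle\vecb_1,\dots,\vecb_r\rangle$ amounts to checking $(\vecb+\N\matG)\cap(\vecb_i+\N\matA)=\emptyset$ for each $i$ — a finite list of integer‑linear feasibility tests of the kind carried out by \texttt{zsolve}. A $\prec$‑maximal candidate is, by transitivity of $\prec$, maximal among \emph{all} proper pairs of $I$, hence a standard pair; this supplies the pair needed in the replacement step.

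The outer loop then terminates and is correct. It keeps $C$ a cover, because $(\veca,\matF)\prec(\vecb,\matG)$ gives $\veca+\N\matF\subseteq\vecb+\N\matG\subseteq\std(I)$, so deleting $(\veca,\matF)$ after inserting $(\vecb,\matG)$ leaves $\bigcup_{(\vecc,\matH)\in C}(\vecc+\N\matH)=\std(I)$ intact; and it strictly decreases the number of non‑standard members of $C$ at each step (a non‑standard pair leaves, a standard one is inserted), so it halts after at most $|C|$ steps with a cover of $\std(I)$ consisting entirely of standard pairs, i.e.\ with $\stdp(I)$ by uniqueness. In particular $\stdp(I)$ is finite, which is also visible directly: the process starts from the finite input cover and performs finitely many replacements.

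The main obstacle is the enlargement step itself — extracting, from a non‑maximal proper pair, a standard pair above it — and this is exactly where the concrete algorithm of \cite{STDPAIR} does its work. Even granting that the candidate set $\{(\vecb,\matG):\matF\subseteq\matG,\ \vecb\in(\veca-\N\matG)\cap\N\matA\}$ is finite in principle, one must (a) enumerate it efficiently rather than naively; (b) interleave the two kinds of moves correctly, since growing the face and pulling back the base cannot in general be separated — a pair may become face‑enlargeable only after its base has been decreased; and (c) certify properness of candidates, which rests on the integer‑programming primitives (\texttt{zsolve} in \texttt{4ti2}, the \texttt{IntersectionOfPairs} routine) and reuses \texttt{pair\_difference} from \cref{thm:pair_difference}. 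Proving that this organized search is finite and terminates at a genuine standard pair, uniformly over the members of an arbitrary input cover, is the substance of the argument.
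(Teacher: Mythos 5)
Your argument is correct, but it proceeds by a genuinely different algorithm than the one the paper transcribes from the proof of \cite[Proposition 4.4]{STDPAIR}. You do a sequential, one-pair-at-a-time replacement: for a non-standard proper pair $(\veca,\matF)$ you enumerate the finite candidate set $\{(\vecb,\matG):\matF\subseteq\matG,\ \vecb\in(\veca-\N\matG)\cap\N\matA\}$, which you correctly show contains every proper pair lying above $(\veca,\matF)$, filter by properness via integer feasibility, and take a $\prec$-maximal survivor; since any proper pair above a candidate is again a candidate, that survivor is a genuine standard pair, and the ascending chain condition you extract from a positive grading on the pointed semigroup makes everything finite. The paper's algorithm instead iterates two global passes to a fixed point: replace each base point by the minimal elements of $(\veca+\R\matF)\cap\N\matA$, computed via the primitive integral support functions of \cite[Lemma 4.2]{STDPAIR} (a search over the whole affine slice $\veca+\R\matF$, not merely $\veca-\N\matG$), then attempt to enlarge faces, and repeat until the cover stabilizes. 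Your route buys self-containedness --- it needs only finiteness of graded pieces and elementary face properties, and it dissolves the interleaving worry you raise at the end, because your candidate set couples face enlargement and base-point pull-back in a single step, so points (a)--(c) of your last paragraph are already handled in principle by your own construction. The paper's route buys computational structure: the support-function computation of minimal elements avoids enumerating all of $(\veca-\N\matG)\cap\N\matA$, which can be very large. One caveat: your final identification of the output with $\stdp(I)$ leans on the uniqueness of the standard cover, which \cref{subsec:principal_ideal} asserts but does not prove; that reliance is consistent with the paper's own framework, but it is the one place where your argument is not self-contained.
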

According to the proof of \cite[Proposition 4.4]{STDPAIR}, this is achieved by repeating the procedures below.

\begin{enumerate}[leftmargin=*]
\label{enum:prop4.4}
\item Input: $C_{0}$, an initial cover of $I$.
\item \label{enum:zero_to_one} For each $(\veca, F) \in C_{0}$, find minimal solutions of $(\veca+\R F) \cap \N \matA$ using the primitive integral support functions. (See \cite[Lemma 4.2]{STDPAIR} for the detail.)
\begin{itemize}[leftmargin=*]
\item  If $\vecb_{1},\vecb_{2},\cdots, \vecb_{m}$ are minimal solutions of $(\veca+\R F) \cap \N \matA$, construct pairs such as $(\vecb_{1},F), (\vecb_{2},F), \cdots, (\vecb_{m},F)$ and store them in the variable $C_{1}$.
\end{itemize}
\item\label{enum:one_to_two} For each pair $(\vecb, F) \in C_{1}$, construct $(\vecb, G)$ for any face $G$ which is not strictly contained in $F$. If $(\vecb, G)$ is a proper pair of $I$, save $(\vecb, G)$ on the variable $C_{2}$.
\item If $C_{0}$ is equal to $C_{2}$, done. Otherwise, set $C_{0}:=C_{2}$ and repeat the above process.
\end{enumerate}

\texttt{czero\_to\_cone($C_0$, $I$)} method in \libraryname implements \cref{enum:zero_to_one} to return $C_{1}$. It calls \texttt{minimalHoles(vector $\veca$, face F, affine semigroup A)} internally, which is the implementation of Lemma 4.2. \texttt{cone\_to\_ctwo($C_1$, $I$)} method implements \cref{enum:one_to_two}. Since the constructor function of the class \texttt{properPair} checks whether the pair is proper or not, the method \texttt{cone\_to\_ctwo($C_1$, $I$)} tries to construct proper pairs as a variable in \sage and record it if it is successful. Lastly, \texttt{cover\_to\_stdPairs( $C,I,$ loopNum=1000)} implements the whole procedure \cref{enum:prop4.4}. Though \cref{prop:cover_to_std_pairs} shows that the given procedure terminates with finitely many steps, we set \texttt{loopNum} for safety. \texttt{loopNum} determines how many steps the loop can use for finding standard pairs. The default value is 1000.

Now we are ready to find the standard cover of a general ideal $I$ whose minimal generators are $\langle \vecb_{1},\cdots, \vecb_{n}\rangle$, one can find standard pairs as in \cite[Theorem 4.5]{STDPAIR} described below.
\begin{enumerate}[leftmargin=*]
\item Find the standard cover $C$ of $\langle \vecb_{1}\rangle$ using pair difference.
\item For $i=2$ to $n$:
\begin{enumerate}[leftmargin=*]
\item For each pair $(\vecb, F)$ in $C$, replace it with elements of the pair difference of pairs $(\vecb,F)$ and $(\vecb_{i}, A)$. After this process $C$ is a cover of an ideal $\langle \vecb_{1},\vecb_{2},\cdots, \vecb_{i}\rangle$.
\item Using an algorithm of \cref{prop:cover_to_std_pairs} find the standard cover $C'$ of $\langle \vecb_{1},\vecb_{2},\cdots, \vecb_{i}\rangle$.
\item Replace $C$ with $C'$.
\end{enumerate}
\item Return $C$.
\end{enumerate}
The returned value $C$ is now the standard cover of $I$. 

\libraryname implements \cite[Theorem 4.5]{STDPAIR} as a method \texttt{standardPairs($I$)}. This method has an input $I$ whose type is \texttt{monomialIdeal}. It returns a cover whose type is \texttt{dictionary}, classifying standard pairs by its face. For example, the code below shows that the standard cover of an ideal generated by $\left[\begin{smallmatrix}2 & 2 & 2\\ 0 & 1 & 2 \\ 2 & 2 &2 \end{smallmatrix}\right]$ in an affine semigroup $\N \matA =\N \left[\begin{smallmatrix} 0&1&1&0 \\ 0&0&1&1 \\ 1&1&1&1\end{smallmatrix}\right]$ is
$$\left\{ \left( 0, \left[\begin{smallmatrix} 0 &0 \\ 0 & 1 \\ 1 & 1 \end{smallmatrix}\right]\right), \left( \left[\begin{smallmatrix} 1 \\ 1 \\ 1\end{smallmatrix}\right], \left[\begin{smallmatrix} 0 &0 \\ 0 & 1 \\ 1 & 1 \end{smallmatrix}\right]\right),  \text{ and }\left( \left[\begin{smallmatrix} 1 \\ 0 \\ 1\end{smallmatrix}\right], \left[\begin{smallmatrix} 0 &0 \\ 0 & 1 \\ 1 & 1 \end{smallmatrix}\right]\right) \right\}.$$
\begin{verbatim}
sage: Q=affineMonoid(matrix(ZZ,[[0,1,1,0],[0,0,1,1],[1,1,1,1]]))
sage: I=monomialIdeal(Q, matrix(ZZ,[[2,2,2],[0,1,2],[2,2,2]]))
sage: I.standardCover()
It takes a few minutes, depending on the system.
Cover for 1 generator was calculated.  2  generators are left. 
Cover for 2 generators was calculated.  1  generators are left. 
Cover for 3 generators was calculated.  0  generators are left. 
{(0, 3): [([[0], [0], [0]]^T,[[0, 0], [0, 1], [1, 1]]),
  ([[1], [1], [1]]^T,[[0, 0], [0, 1], [1, 1]]),
  ([[1], [0], [1]]^T,[[0, 0], [0, 1], [1, 1]])]}
\end{verbatim}

\section{Compatibility with \normaliz package in \sage and \macaulay}
\label{section:compatibility}

\normaliz is a package in \sage and \macaulay for finding Hilbert bases of rational cones and its normal affine monoid \cite{MR2659215}. \libraryname has methods translating classes in \cref{section:classes} into objects in the \normaliz package. If an affine semigroup $\N \matA$ is \emph{normal}, i.e., $\N \matA = \Z^{d} \cap \R_{\geq0}A$, then this translation works well. However, if it is not normal, then this translates into the saturation of the given affine semigroup described in \cref{section:classes}.

For \sage, one can have a polyhedron over $\Z$ with the \normaliz package in \sage by adding an argument \texttt{True} on the constructor of \texttt{affineMonoid}. For example, the code below gives an \texttt{affineMonoid} class variable $Q$ whose member variable $Q.poly$ is a polyhedron over $\Z$ with \normaliz.

\begin{verbatim}
sage: load("stdPairs.spyx")
Compiling ./stdPairs.spyx...
sage: Q=affineMonoid(matrix(ZZ, [[0,1,1,0],[0,0,1,1],[1,1,1,1]]),
 True)
\end{verbatim}

For \macaulay, \texttt{ToMacaulay2( monomialIdeal I)} returns a dictionary storing variable of \macaulay computations. This dictionary contains affine semigroup ring, a list of generators of an ideal, and a list of standard pairs in \macaulay. For example,
\begin{verbatim}
sage: load("stdPairs.spyx")
Compiling ./stdPairs.spyx...
sage: Q=affineMonoid(matrix(ZZ,[[0,1,1,0],[0,0,1,1],[1,1,1,1]]))
sage: I=monomialIdeal(Q, matrix(ZZ,[[2,2,2],[0,1,2],[2,2,2]]))
sage: S = ToMacaulay2(I,I.standardCover())
sage: print(S['AffineSemigroupRing'])
MonomialSubalgebra{cache => CacheTable{}             }
                   generators => {c, a*c, a*b*c, b*c}
                   ring => R
sage: print(S['MonomialIdeal'])
  2 2   2   2   2 2 2
{a c , a b*c , a b c }
sage: print(S['StandardCover']) 
{{a*c, {c, b*c}}, {a*b*c, {c, b*c}}, {1, {c, b*c}}}
\end{verbatim}
In \macaulay, a type \texttt{MonomialSubalgebra} in the \normaliz package may correspond to an affine semigroup ring. Since \normaliz has no attributes for a monomial ideal of the type \texttt{MonomialSubalgebra}, the ideal is stored as a list of its generators. The standard cover of $I$ is also sent to \macaulay as a nested list, similar to the output of the method \texttt{standardPairs} in \macaulay.

Conversely, \texttt{FromMacaulay( Macaulay2 $S$)} translates \texttt{monomialSubalgebra} object $S$ of \macaulay into an \texttt{affineMonoid} object in \libraryname. For example, 
\begin{verbatim}
sage: R = macaulay2('ZZ[x,y,z]')
sage: macaulay2("loadPackage Normaliz")
Normaliz
sage: S=macaulay2('createMonomialSubalgebra {x^2*y, x*z, z^3}')
sage: A=FromMacaulay(S)
sage: A
An affine semigroup whose generating set is 
[[2 1 0]
 [1 0 0]
 [0 1 3]]
\end{verbatim}

\bibliographystyle{plain}
\bibliography{std_pair_calculation}
\end{document}